\newtheorem*{thmm}{Main Theorem}
\newtheorem{thm}{Theorem}[section]
\newtheorem{cor}[thm]{Corollary}
\newtheorem{lem}[thm]{Lemma}
\newtheorem{prop}[thm]{Proposition}
\newtheorem{rem}[thm]{Remark}
\newcommand{\del}[2]{{}}
\newcommand{\Z}{\mathbb Z}
\newcommand{\G}{\Gamma}
\newcommand{\La}{\Lambda}
\newcommand{\R}{\mathbb R}
\title[]{Spin Structures on almost-flat manifolds}
\author{A.~G\c{a}sior}
\address{Maria Curie-Sk{\l}odowska University,  Lublin, Poland}%
\email{anna.gasior@poczta.umcs.lublin.pl}%
\author{N.~Petrosyan}
\address{Department of Mathematics, University of Southampton, Southampton, UK}%
\email{N.Petrosyan@soton.ac.uk}%
\author{A.~Szczepa\'{n}ski}
\address{Institute of Mathematics, University of Gda\'{n}sk, Gda\'{n}sk, Poland}%
\email{aszczepa@mat.ug.edu.pl}%
\thanks{}%
\subjclass{}%
\keywords{almost-flat manifold, infra-nilmanifold, Spin structure}%
\date{\today}
\begin{document}
\maketitle
\begin{abstract} We give a necessary and sufficient  condition for almost-flat manifolds with cyclic holonomy to admit a Spin structure. Using this condition we find all $4$-dimensional orientable
almost-flat manifolds with cyclic holonomy that do not admit a Spin structure. 
\end{abstract}

\section{Introduction}
An {\it almost-flat} manifold is a closed manifold $M$ with the property that for any $\varepsilon >0$ there exists a Riemannian metric $g_{\varepsilon}$ on $M$ such that $|K_{\varepsilon}|\mathrm{diam}(M, g_{\varepsilon})^2<\varepsilon$ where $K_{\varepsilon}$ is the sectional curvature and  $\mathrm{diam}(M, g_{\varepsilon})$ is the diameter of $M$. According to a result of Gromov (see \cite{gromov})  every almost-flat manifold is finitely covered by a nilmanifold, i.e. a quotient of a connected, simply-connected nilpotent Lie group by a uniform lattice. Ruh (see \cite{ruh}) later improved Gromov's result by showing that in fact every almost-flat manifold is infra-nil. Let us recall the definition of such a manifold. 

Given   a connected and simply-connected nilpotent Lie group $N$,  the group of affine transformations of $N$ is defined as ${\rm Aff}(N)= N\rtimes {\rm Aut}(N)$. This group  acts on $N$ by 
$$(n,\varphi)\cdot m= n\varphi(m) \;\;\; \forall m, n\in N \mbox{ and } \forall \varphi \in {\rm Aut}(N).$$
\noindent Let $C$ be a maximal compact subgroup of ${\rm Aut}(N)$ and consider the subgroup $N\rtimes C$ of ${\rm Aff}(N)$. A discrete subgroup $\G\subset N\rtimes C$ that acts  co-compactly on $N$ 
is called an {\it almost-crystallographic group}.  In addition, if $\G$ is torsion-free then it is said to be {\it almost-Bieberbach}. In this case, the quotient $N/\G$ is a closed manifold called  {\it infra-nilmanifold} (modelled on $N$).

Conversely, every infra-nilmanifold is almost-flat because it is finitely covered by a nilmanifold and every nilmanifold is almost-flat (see \cite{gromov} and also \cite{BK}). So from now on we will make no distinction between the classes of almost-flat  and infra-nil manifolds. 

In this paper we study the problem of determining the existence of Spin structures on almost-flat manifolds. The existence of Spin structures  on flat manifolds and related invariants have  been investigated by the third author and others for the special case of flat manifolds (see for example \cite{DSS}, \cite{GS},  \cite{HS}, \cite{MP}, \cite{MP2},   \cite{PS}, and \cite{szcz}). Our results represent the first modest step towards understanding  this problem in the more general setting of almost-flat manifolds.

Before stating our main result let us recall the definition of a Spin structure on a smooth orientable manifold. We denote by $\mathrm{SO}(n)$ the real  special orthogonal group of rank $n$ and by $\mathrm{Spin}(n)$ its universal covering group. We also write $\lambda_n:\mathrm{Spin}(n)\to \mathrm{SO}(n)$ for the (double) covering homomorphism. A {\it Spin structure} on a smooth orientable manifold $M$ is an equivariant lift of its orthonormal frame bundle via the covering $\lambda_n$.  The existence of such a  lift is equivalent to the existence of a lift $\tilde{\tau}:M\to B\mathrm{Spin}(n)$ of the classifying map of the tangent bundle $\tau:M\to B\mathrm{SO}(n)$ 
such that $B_{\lambda_n}\circ\tilde{\tau}={\tau}$.  Equivalently, $M$ has a Spin structure if and only if  the second Stiefel-Whitney class $w_2(T M)$ vanishes (see \cite[p.~33-34]{kirby}).  
 
Returning to infra-nilmanifolds, by a classical result of Auslander (see \cite{auslander}) every almost-crystallographic subgroup $\G \subset \mathrm{Aff}(N)$ fits into an extension
$$1\to \La \to \G {\buildrel q\over \to} F \to 1$$ where $\Lambda=\G\cap N$ is a uniform lattice in $N$ and $F$ is a finite subgroup of $C$ called the {\it holonomy group} of the corresponding infra-nilmanifold $N/\G$. The conjugation action of $\Gamma$ on $\Lambda$ induces an action of the holonomy group $F$ on the factor groups of the adapted lower central series (see page 5) of the nilpotent lattice $\Lambda$.  This give us a representation $\theta:F\hookrightarrow \mathrm{GL}(n, \Z)$ where $n$ is the dimension of $N$.

Our main result is the following.
\begin{thmm}\label{main} Let $M$ be an almost-flat manifold  with holonomy group $F$.  Then $M$ is orientable if and only if $\det \theta =1$. Suppose $M$ is orientable and  a 2-Sylow subgroup of $F$ is cyclic, i.e.~$C_{2^m}=\langle t \;|\; t^{2^m}=1 \rangle$ for some $m\geq 0$.  Let $\Gamma_{ab}$ denote the abelianisation of the fundamental group $\G$ of $M$.
%Denote $\Gamma_2={q^{-1}(C_2)}$ and let $M(2)= N/{\Gamma_2}$. 
\begin{itemize}
\smallskip
\item[(a)]  If ${1\over 2}(n-\mathrm{Trace}(\theta(t)^{2^{m-1}}))\not\equiv 2  \; (\mathrm{ mod } \; 4),$ then $M$ has a Spin structure.
\medskip
\item[(b)] If ${1\over 2}(n-\mathrm{Trace}(\theta(t)^{2^{m-1}}))\equiv 2  \; (\mathrm{ mod } \; 4)$, then $M$ has a Spin structure if and only if the epimorphism ${q}_*:\G_{ab}\to C_{2^m}$ resulting from projection $q:\Gamma\to C_{2^m}$ factors through a cyclic group of order $2^{m+1}$.  
\end{itemize}
\end{thmm}

The conditions arising in the theorem are quite practical to check given a finite presentation of the fundamental group of the almost-flat manifold, i.e.~the associated almost-Bieberbach group. We illustrate this by finding all $4$-dimensional almost-flat manifolds whose holonomy group has a cyclic 2-Sylow subgroup that do not admit a Spin  structure. 

%\begin{quest}\rm
%Do all orientable almost-flat manifolds with holonomy $C_2$ modelled on a free, connected, simply connected, nilpotent Lie group $N$ admit a Spin structure?
%\end{quest}
%
%A Lie group is said to be {\it free nilpotent of class $c$} if its Lie algebra is free nilpotent of class %$c$. Let $M$ be an infra-nilmanifold modelled  on a  free $c$-step nilpotent Lie group with fundamental %group $\G$. 

%We denote by $Q$ the quotient group $\G/[\La, \La]$ and note that it fits into extension
%$$1\to \La_{ab} \to Q\to  F \to 1$$ where the abelianization $\La_{ab}$ of $\La$ is a free abelian group.  Recall that there is an induced action of $F$  on the abelianization of $\La$ induced by conjugation in $Q$.
%
%Throughout $C_m$ will denote a cyclic group of order $m$ and the $\Z_2$-rank (the rank of a maximal elementary abelian $2$-subgroup)  of an abelian group  $A$ will be denoted by $rk_2A$.
%
%
%\begin{thm} Let $M$ be an orientable infra-nilmanifold with  holonomy group $C_2$.  If $Q$ is torsion-free, then $M$ is Spin. 
%\end{thm}
%
%
%\begin{ques}\rm
%Can we  generalize these results to any cyclic holonomy which is a product of distinct primes?
%\end{ques}
%
%
%\begin{ques}\rm
%Can we  remove the torsion-free assumption? 
%\end{ques}

\section{Results}

We first show that the classifying map of the tangent bundle of an almost-flat manifold $M$ factors through the classifying space of the holonomy group $F$ and is  induced by a representation  $\rho:F\to  \mathrm{O}(n)$.  Let us  describe this representation.  

Define $\mathfrak{n}$ to be the Lie algebra corresponding to the nilpotent Lie group $N$ modelling $M$. Since $N$ is a connected and simply-connected, nilpotent Lie group, the differential defines an isomorphism $d:\mathrm{Aut}(N)\to \mathrm{Aut}(\mathfrak{n})$. Choose an inner product $\langle \;,\; \rangle$  on  $\mathfrak{n}$.  Since $d(C)$ is a compact subgroup of $\mathrm{Aut}(\mathfrak{n})$, we can define a new inner product  $\langle\langle \;,\; \rangle\rangle$  on $\mathfrak{n}$ that is also invariant under the action of $d(C)$ by letting
$$\langle\langle v, w \rangle\rangle=\int_{d(C)} \langle xv, xw \rangle \mu(x)\;\;\; \forall v, w \in \mathfrak{n},$$
where $\mu$ is a left-invariant Haar measure on $d(C)$.

 Now, we select  basis on $\mathfrak{n}$ orthonormal with respect to the new inner product.  Identifying this basis with the standard basis in $\R^n$ defines a vector space isomorphism $\eta:\mathfrak{n}\to \R^n$ and  a monomorphism $\delta:\mathrm{Aut}(\mathfrak{n})\to \mathrm{GL}(n)$ such that $\delta\circ d(C)\subseteq \mathrm{O}(n)$.   We define $\rho:F\hookrightarrow  \mathrm{O}(n)$ by restricting the domain and the codomain  of the composite homomorphism $$C\hookrightarrow \mathrm{Aut}(N)\xrightarrow{d} \mathrm{Aut}(\mathfrak{n})\xrightarrow{\delta}\mathrm{GL}(n)$$  to $F$ and $\mathrm{O}(n)$, respectively. It is crucial to note that  $\rho$ is well-defined up to isomorphism of representations. That is, for a different choice of the inner product and  the orthonormal basis  on $\mathfrak{n}$  one obtains a representation that is isomorphic to $\rho:F\hookrightarrow  \mathrm{O}(n)$. 

%Since $M$ is infra-nilmanfold modelled on the Lie group $N$, it can be given a Riemannian metric that invariant under the action of $\mathrm{Aff}(N)$. This is done by chosen an inner product on the tangent space of $N$ at the identity, i.e. on the nilpotent Lie algebra $\mathfrak{n}$ corresponding to $N$ and then extending this inner product to the whole tangent bundle $TN$ by left translations. This gives a metric on the Lie group $N$ that is invariant under the left action of $N$ on $N$. Since the group $C$ is a compact subgroup of $\mathrm{Aut}(N)$, we can then average this metric over $C$ to obtain a new metric on $N$ that is also invariant under the action of $C$ on $N$. In all, this new metric will be invariant under the action of $N\rtimes C$ and hence under the action of $\Gamma$. We can then given $M=N/\Gamma$ the induced metric which w

\begin{prop} \label{reduce}
Let $M$ be a $n$-dimensional almost-flat manifold modelled on a connected and simply connected nilpotent Lie group $N$. Denote by $\G$ the fundamental group of $M$ and let
$$1\to \La \to \G {\buildrel q\over \to} F \to 1$$  be the standard extension of $\G$. Then the classifying map $\tau:M\to {BO}(n)$ of the tangent bundle of $M$ factors through $BF$ and is induced by a composite homomorphism $\rho\circ q:\G\to F {\buildrel \rho\over \to} \mathrm{O}(n)$.
\end{prop}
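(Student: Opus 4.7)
My plan is to trivialize $TN$ by left-translation, compute how $\G$ acts in this trivialization, and then recognize the result as the flat bundle associated to $\rho\circ q$.

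Since $N$ is a Lie group its tangent bundle is canonically trivial: using left translations, the map
\[ N\times \mathfrak{n}\to TN,\qquad (m,v)\mapsto (dL_m)_e(v)\]
is a diffeomorphism. The first step is to compute the $\G$-action on $TN$ in this trivialization. For $\gamma=(n,\varphi)\in\G\subset N\rtimes C$, the action on $N$ is $L_n\circ\varphi$. Using the intertwining identity $\varphi\circ L_m=L_{\varphi(m)}\circ\varphi$, a short calculation gives
\[ d\gamma_m\bigl((dL_m)_e(v)\bigr)=(dL_{n\varphi(m)})_e\bigl((d\varphi)_e(v)\bigr),\]
so in the trivialization the action reads $(m,v)\mapsto(n\varphi(m),\,(d\varphi)_e v)$. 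In particular the action on the fiber factors through the composite $\G\to C\xrightarrow{d}\mathrm{Aut}(\mathfrak{n})$, which is trivial on $\La=\G\cap N$ and therefore descends to $F$.

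The second step is to put an orthogonal structure on the fibre. Choosing the $d(C)$-invariant inner product $\langle\langle\,,\,\rangle\rangle$ on $\mathfrak{n}$ and the orthonormal basis that defines $\eta$ and $\delta$ as in the paragraph preceding the proposition, the linear $F$-action on $\mathfrak{n}\cong\R^n$ becomes the representation $\rho:F\hookrightarrow\mathrm{O}(n)$. Thus, taking the quotient by $\G$, we obtain
\[ TM \;\cong\; N\times_{\G}\R^n,\]
where $\G$ acts on $\R^n$ through $\rho\circ q$. This exhibits $TM$ as the flat $\mathrm{O}(n)$-bundle associated to the homomorphism $\rho\circ q$.

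For the final step, recall that $\G$ is torsion-free and acts freely and properly discontinuously on the contractible space $N$, so $M=N/\G$ is a $K(\G,1)$ and may be identified with $B\G$. Under this identification, any flat $\mathrm{O}(n)$-bundle over $M$ associated to a representation $\sigma:\G\to\mathrm{O}(n)$ is classified by $B\sigma:B\G\to B\mathrm{O}(n)$. Applying this to $\sigma=\rho\circ q$ shows that the classifying map of $TM$ is the composite
\[ M\simeq B\G \xrightarrow{\;Bq\;} BF \xrightarrow{\;B\rho\;} B\mathrm{O}(n),\]
which is precisely the factorization claimed. The only real subtlety is step one, the verification that the left-invariant trivialization converts the affine action on $N$ into a product action whose fibre component depends only on $\varphi$; once this is in place, the remaining identifications are formal.
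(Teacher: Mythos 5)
Your proposal is correct and follows essentially the same route as the paper: left-invariant trivialization of $TN$, the computation that the $\G$-action on the fibre factors through $F$ (which the paper dismisses as ``a quick computation'' but you carry out via the intertwining identity $\varphi\circ L_m=L_{\varphi(m)}\circ\varphi$), the choice of $d(C)$-invariant inner product to land in $\mathrm{O}(n)$, and the identification $M\simeq B\G$ to recognize $N\times_{\G}\R^n$ as the flat bundle classified by $B(\rho\circ q)$. The only cosmetic difference is that the paper phrases the last step as an explicit bundle map to the Borel construction $EF\times_F\R^n$ rather than invoking the general classification of flat bundles, but the content is identical.
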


\begin{proof} Let $\rho:F\hookrightarrow  \mathrm{O}(n)$ be the representation constructed above. This yields a map on the classifying spaces ${B}_{\rho}: BF\to BO(n)$ which is well-defined up to homotopy. Denote by $\sigma$ the pullback of the universal $n$-dimensional vector bundle on $BO(n)$ under the map ${B}_{\rho}$. Its  total space is the Borel construction  $EF \times_F \R^n$, i.e. the quotient of $EF \times \R^n$ by the action of $F$ given by $f\cdot(x, v)=(fx, \rho(f)v)$, $\forall f\in F$, and $\forall (x,v)\in  EF\times \R^n$.

We claim that the pullback bundle ${B}_q^*(\sigma)$ of $\sigma$ under the  map ${B}_q:B\Gamma \to BF$ is isomorphic to tangent bundle $T M\to M$. To see this, let $L_g:N\to N: h\mapsto gh$ be the left multiplication by an element $g$ in $N$. It is a standard fact from Lie groups
that the map $$\phi:TN\to N\times \mathfrak{n}: (g, v )\mapsto (g, d{L}_{g^{-1}}(v)),  \;\;\;\;\; \forall g\in N, \; \forall v\in \mathfrak{n},$$ gives a trivialisation of the tangent bundle of $N$. A quick computation shows that this map is equivariant with respect to the  action of $\Gamma$ on $N\times \mathfrak{n}$ given by
 $$\gamma \cdot(g, v)=(\gamma g, q(\gamma) (v)), \; \forall \gamma \in \Gamma \mbox{ and } \forall (g,v)\in N \times \mathfrak{n}.$$

Hence, we obtain a commutative diagram:
\[   \xymatrix{   & TN\ar[d] ^{/\Gamma}\ar[r]^{\phi} & N\times \mathfrak{n}\ar[d]^{/\Gamma} \\
                 & TM \ar[r]^{\overline{\phi}}  & N\times_{\Gamma} \mathfrak{n}} \]
where the resulting map $\overline{\phi}:TM\to N\times_{\Gamma} \mathfrak{n}$  gives an isomorphism between the tangent bundle of $M$ and $\overline{pr}_1:N\times_{\Gamma} \mathfrak{n}\to M$.
But since $N$ is a model for $E\Gamma$, we also have  a commutative diagram:
\[   \xymatrix{   & N\times_{\Gamma}\mathfrak{n}\ar[d]^{\overline{pr}_1}  \ar[r]^{\psi} & EF\times_{F}\R^n \ar[d]^{\sigma}  \\
                 & M \ar[r]^{{B}_q}  & BF} \]
for $\psi:N\times_{\Gamma}\mathfrak{n}\to EF\times_{F}\R^n: \; \{ g, v\}\mapsto \{ {E}_q(g), \eta(v)\}$. This finishes the claim and the proposition follows.
\end{proof}

\begin{rem}\rm If the manifold $M$ is orientable, then in the statement of the proposition the structure group $\mathrm{O}(n)$ can be replaced by $\mathrm{SO}(n)$.
\end{rem}

With the previous notation, we define the {\it classifying representation} of an oriented almost-flat manifold $M$ to be the composite homomorphism $\rho \circ q:\G\to \mathrm{SO}(n)$. Recall that it is well-defined up to isomorphism of representations. 

\begin{cor}Let $M$ be an orientable almost-flat manifold of dimension $n$ with  
fundamental group $\Gamma$.
Then M has a Spin structure if and only if there exists a homomorphism
$\epsilon : \Gamma \to \mathrm{Spin}(n)$ such that $\lambda_n\circ\epsilon
= \rho\circ q.$
\end{cor}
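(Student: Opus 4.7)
The plan is to translate the existence of a Spin structure into the splitting of a central extension obtained by pulling back the double cover $\mathrm{Spin}(n)\to\mathrm{SO}(n)$ along the classifying representation $\rho\circ q$, and to observe that such a splitting is exactly the data of the homomorphism $\epsilon$.

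First I would record that $M=N/\G$ is aspherical because $N$ is contractible, so $M$ is a model for $B\G$ and $H^{\ast}(M;\Z/2)=H^{\ast}(\G;\Z/2)$. The obstruction $w_2(TM)$ to the existence of a Spin structure therefore lives in group cohomology. Since Proposition \ref{reduce} identifies the classifying map of $TM$ with $B_{\rho\circ q}$, we have $w_2(TM)=(\rho\circ q)^{\ast}w_2$, where $w_2\in H^2(B\mathrm{SO}(n);\Z/2)$ classifies the central extension
$$1\to \Z/2\to\mathrm{Spin}(n)\xrightarrow{\lambda_n}\mathrm{SO}(n)\to 1.$$

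Next I would form the pullback group $\tilde\G=\{(\gamma,s)\in\G\times\mathrm{Spin}(n) : \rho(q(\gamma))=\lambda_n(s)\}$, which fits into a central extension $1\to\Z/2\to\tilde\G\to\G\to 1$ whose cohomology class in $H^2(\G;\Z/2)$ is exactly $(\rho\circ q)^{\ast}w_2=w_2(TM)$. This extension splits iff its class vanishes, and a group-theoretic splitting $\G\to\tilde\G$ is the same data as a homomorphism $\epsilon:\G\to\mathrm{Spin}(n)$ with $\lambda_n\circ\epsilon=\rho\circ q$: one direction sends $\epsilon$ to $\gamma\mapsto(\gamma,\epsilon(\gamma))$, and the other recovers $\epsilon$ as the second coordinate of the splitting. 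Chaining these equivalences yields the corollary.

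The only nontrivial step is the identification of the class of the pulled-back extension with the pullback $(\rho\circ q)^{\ast}w_2$, which is a standard fact about the classifying cohomology class of a central $\Z/2$-extension and can simply be cited. One may also give the easy $(\Leftarrow)$ direction more concretely: if $\epsilon$ exists then $B\epsilon:B\G\to B\mathrm{Spin}(n)$ satisfies $B_{\lambda_n}\circ B\epsilon=B(\rho\circ q)=\tau$, so $B\epsilon$ provides a homotopy lift of the tangent classifying map and hence a Spin structure on $M$.
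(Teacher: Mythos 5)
Your proposal is correct and follows essentially the same route as the paper: both use that $M=B\G$ to identify $w_2(TM)$ with $(\rho\circ q)^{*}$ of the class of the extension $1\to\Z_2\to\mathrm{Spin}(n)\to\mathrm{SO}(n)\to1$, and both obtain $\epsilon$ from a splitting of the pulled-back central extension $\widetilde\G$ of $\G$ (and conversely get the Spin structure from $B_\epsilon$). The only difference is presentational: you phrase the whole argument as a chain of equivalences through the pullback group, whereas the paper handles the easy direction separately via classifying spaces.
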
 
\begin{proof} The manifold $M$ has a Spin structure if and only if  the classifying map $\tau=B_{\rho\circ q}:M\to B\mathrm{O}(n)$ 
has a lift $\tilde{\tau}:M\to B\mathrm{Spin}(n)$ such that $B_{\lambda_n}\circ\tilde{\tau}={B}_{\rho\circ q}$. 
Since $M=B\G$,   a homomorphism $\epsilon : \Gamma \to \mathrm{Spin}(n)$ satisfying $\lambda_n\circ\epsilon
= \rho\circ q$  yields a map $B_{\epsilon}:M\to B\mathrm{Spin}(n)$  such that  $B_{\lambda_n}\circ B_{\epsilon}={B}_{\rho\circ q}$. Hence, $M$ has a Spin structure.

For the other direction, assume  $M$ has a Spin structure. Then $w_2(T M)=0$. But $w_2(T M)$ is the image of the generator of $\mathrm{H}^2(B\mathrm{SO}(n), \Z_2)=\Z_2$ under the homomorphism $B_{\rho\circ q}^*:\mathrm{H}^2(B\mathrm{SO}(n), \Z_2)\to \mathrm{H}^2(B\G, \Z_2)$. Identifying this homomorphism with $({\rho\circ q})^*:\mathrm{H}^2(\mathrm{SO}(n), \Z_2)\to \mathrm{H}^2(\G, \Z_2)$ we obtain that the image of the generator of  $\mathrm{H}^2(\mathrm{SO}(n), \Z_2)$ is zero.  Reinterpreting the statement using group extensions, gives us a commutative diagram
\[   \xymatrix{   & \Z_2\  \ar[r]^{} & \mathrm{Spin}(n)\ar[r]^{\lambda_n}  &\mathrm{SO}(n)  \\
                 & \Z_2\ar[u]_{id} \ar[r] & \widetilde{\G}\ar[u]_{\omega} \ar[r]^{\pi} &\G\ar@/^1.5 pc/[l]^{s}\ar[u]_{\rho\circ q}} \]
                 where $\pi\circ s=id_{\G}$. Setting $\epsilon = \omega \circ s$ we have $\lambda_n\circ\epsilon
= \rho\circ q$ as desired.
\end{proof}

Next, we will show that the  representation $\rho:F\hookrightarrow \mathrm{O}(n)$  is isomorphic in $\mathrm{GL}(n)$ to a representation that arises from the action of the holonomy group on the factor groups of a certain adapted lower central series of the nilpotent lattice $\Lambda$. This representation will turn out to be more suitable for applications. 

To this end,  we denote by $$\Lambda=\gamma_1(\Lambda)> \gamma_2(\Lambda)>\cdots >\gamma_{c+1}(\Lambda)=1,$$ 
the  lower central series of $\Lambda$, i.e. $\gamma_{i+1}(\Lambda)=[\Lambda, \gamma_i(\Lambda)]$ for $1\leq i\leq c$. By Lemma 1.2.6 of \cite{dekimpe}, we have that $\sqrt[\Lambda]{\gamma_i(\Lambda)}=\Lambda \cap {\gamma_i(N)}$. By Lemmas 1.1.2-3 of \cite{dekimpe}, the resulting {\it adapted lower central series}
$$\Lambda=\sqrt[\Lambda]{\gamma_1(\Lambda)}>  \sqrt[\Lambda]{\gamma_2(\Lambda)}>\cdots >\sqrt[\Lambda]{\gamma_{c+1}(\Lambda)}=1,$$ has torsion-free factor groups 
$$Z_i={\sqrt[\Lambda]{\gamma_i(\Lambda)}\over{\sqrt[\Lambda]{\gamma_{i+1}(\Lambda)}}},\;\;\; 1\leq i\leq c.$$ Thus, each $Z_i\cong \Z^{k_i}$  for some positive integer $k_i$.
Just as in the case when $\Lambda$ is abelian, conjugation in $\Gamma$ induces an action of the holonomy group $F$ on each factor group $Z_i$. This gives  a faithful representation 
$$\theta:F\hookrightarrow \mbox{GL}(k_1, \Z)\times \cdots \times  \mbox{GL}(k_c, \Z)\hookrightarrow \mathrm{GL}(n, \Z), \;\;\; k_1+\cdots + k_c=n.$$ The representation is indeed  faithful since its kernel is a finite unipotent group and is therefore trivial.
\begin{prop}\label{equiv} The representations $\theta\otimes \R:F\hookrightarrow \mathrm{GL}(n)$ and $\rho:F\hookrightarrow \mathrm{O}(n)\subset \mathrm{GL}(n)$ are isomorphic.
\end{prop}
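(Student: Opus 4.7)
The plan is to bridge the two representations through the common action of $F$ on the Lie algebra $\mathfrak{n}$ via $d$: first identify $\rho$ with this action, then split $\mathfrak{n}$ through its lower central series filtration using Maschke's theorem, and finally match each graded piece with $Z_i\otimes\R$ via the exponential--logarithm (Mal'cev) correspondence.

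First I would observe that by construction $\rho$ is simply the composition $F\hookrightarrow C\hookrightarrow \mathrm{Aut}(N)\xrightarrow{d}\mathrm{Aut}(\mathfrak{n})$ transported to $\mathrm{GL}(\R^n)$ via the chosen orthonormal basis $\eta:\mathfrak{n}\to\R^n$. So it suffices to produce an $F$-equivariant linear isomorphism between $\mathfrak{n}$ (carrying the $d$-action) and $\R^n$ (carrying the $\theta$-action).

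Second, the lower central series $\mathfrak{n}\supset \gamma_2(\mathfrak{n})\supset\cdots\supset\gamma_{c+1}(\mathfrak{n})=0$ is characteristic and hence preserved by $d(F)$. Since $F$ is finite and $\R$ has characteristic zero, Maschke's theorem makes $\R[F]$ semisimple, so every filtration of a finite-dimensional real $F$-representation splits. This yields an $F$-equivariant isomorphism
$$\mathfrak{n}\;\cong\;\bigoplus_{i=1}^{c}\gamma_i(\mathfrak{n})/\gamma_{i+1}(\mathfrak{n}).$$

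Third, I would use the exponential map to identify each $\gamma_i(\mathfrak{n})/\gamma_{i+1}(\mathfrak{n})$ with $Z_i\otimes_{\Z}\R$ as $F$-modules. Under $\exp:\mathfrak{n}\to N$ (a diffeomorphism since $N$ is simply-connected nilpotent), $\gamma_i(\mathfrak{n})$ corresponds to $\gamma_i(N)$, and on the abelian quotient $\gamma_i(N)/\gamma_{i+1}(N)$ the logarithm is a group isomorphism onto $\gamma_i(\mathfrak{n})/\gamma_{i+1}(\mathfrak{n})$. By Mal'cev's theorem $\sqrt[\Lambda]{\gamma_i(\Lambda)}=\Lambda\cap\gamma_i(N)$ is a uniform lattice in $\gamma_i(N)$, so its image $Z_i$ in the abelian quotient is a full-rank lattice, giving $Z_i\otimes\R\cong\gamma_i(\mathfrak{n})/\gamma_{i+1}(\mathfrak{n})$. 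Equivariance is then a consequence of the naturality identity $\varphi\circ\exp=\exp\circ\,d\varphi$ for every $\varphi\in\mathrm{Aut}(N)$, combined with the observation that the conjugation action of $\gamma\in\Gamma$ on $\Lambda$ (which defines $\theta$) agrees with the $C$-action of $q(\gamma)$ on $\Lambda\subset N$ up to translation by the $N$-part, and translations act trivially on the graded pieces.

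The main obstacle is precisely the equivariance check in the last step: one must carefully verify that the abstract conjugation action of $\Gamma$ on $Z_i$ corresponds, under the Mal'cev correspondence, to the linearised $d$-action on $\gamma_i(\mathfrak{n})/\gamma_{i+1}(\mathfrak{n})$, rather than differing by a translation or an inner twist inside $N$.
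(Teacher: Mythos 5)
Your proposal is correct and shares its essential skeleton with the paper's proof: both arguments reduce the comparison to the graded pieces $\gamma_i(\mathfrak{n})/\gamma_{i+1}(\mathfrak{n})$ of the lower central series and identify these with $Z_i\otimes\R$ via the exponential map, the Mal'cev correspondence, and the observation that the relevant actions agree on the quotients. Where you diverge is in how the passage from the filtered object to the direct sum of its graded pieces is justified. The paper never constructs an equivariant splitting: it invokes the character criterion for finite groups (two representations of $F$ over a field of characteristic zero are isomorphic iff their characters agree, \cite[30.14]{CR}) and uses the elementary fact that a block-upper-triangular matrix has the same trace as its block-diagonal part, so $\delta$ and $\delta_1\oplus\cdots\oplus\delta_c$ have equal characters. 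You instead invoke Maschke's theorem to split the filtration $F$-equivariantly and produce an honest isomorphism $\mathfrak{n}\cong\bigoplus_i\gamma_i(\mathfrak{n})/\gamma_{i+1}(\mathfrak{n})$. Both routes use finiteness of $F$ in an essential way and neither is harder than the other; yours has the mild advantage of yielding an explicit intertwiner, while the paper's trace argument sidesteps the splitting entirely. Your handling of the equivariance step is also sound, though note one terminological point: the $N$-part of $\gamma=(n,\varphi)$ contributes the \emph{inner automorphism} $m\mapsto nmn^{-1}$ rather than a translation, and this acts trivially on $\gamma_i(N)/\gamma_{i+1}(N)$ precisely because $[N,\gamma_i(N)]\subseteq\gamma_{i+1}(N)$ --- which is the fact you need and clearly intend.
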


\begin{proof}  Since $F$ is finite, it suffices to show that the two representations have equal characters (see \cite[30.14]{CR}). Let $\mathcal{C}:\mathrm{Aff}(N)\to \mathrm{Aut}(N)$ denote the homomorphism defined by the conjugation action of the group of affine transformations  on the normal subgroup $N$. Note that restricted to the standard subgroup $\mathrm{Aut}(N)$ of $\mathrm{Aff}(N)$, this is just the identity homomorphism.   Let $\mathrm{exp}:\mathfrak{n}\to N$ be  the exponential map.  Recall that for any homomorphism $\varphi: N\to N$ there is a commutative diagram
\[   \xymatrix{   & \mathfrak{n}\ar[d] ^{\mathrm{exp}}\ar[r]^{d\varphi} &  \mathfrak{n}\ar[d]^{\mathrm{exp}} \\
                 & N \ar[r]^{{\varphi}}  & N}\]
Moreover each subgroup $\gamma_i(N)$ in the lower central series of $N$ is characteristic in $N$ and one has $\mathrm{exp}(\gamma_i(\mathfrak{n}))=\gamma_i(N)$   (see \cite[Lemma 1.2.5]{dekimpe}). 

Now, we choose a Mal'cev basis for $\mathfrak{n}$ so that the  images of its elements under the exponential map generate the lattice $\Lambda$.  By construction, the subspaces $V_i=\eta(\gamma_i(\mathfrak{n})), 1\leq i\leq c$ give us a filtration
 $$0=V_{c+1}\subset V_c\subset  \dots \subset V_1 = \R^n$$ with $\dim V_{i}= k_i+\dots + k_c$   and each $V_i$ is  left invariant under the action by the image of the homomorphism $\delta:\mathrm{Aut}(\mathfrak{n})\to \mathrm{GL}(n)$.  
For each $1\leq i\leq c$,  this defines a  representation $$\delta_i:\mathrm{Aut}(\mathfrak{n})\to \mathrm{GL}(V_i/V_{i+1}).$$  Let $\tilde{\rho}_i:\Gamma\to \mbox{GL}(k_i)$ denote the composition 
$$\Gamma\hookrightarrow \mathrm{Aff}(N)\xrightarrow{\mathcal{C}}  \mathrm{Aut}(N)\xrightarrow{d} \mathrm{Aut}(\mathfrak{n})\xrightarrow{\delta_i}\mbox{GL}(V_i/V_{i+1}).$$ Since $\Lambda$ is in the kernel of $\tilde{\rho}_i$, it gives rises to the representation $\rho_i:F\to \mathrm{GL}(k_i)$. Since $\delta$ and $\delta_1\oplus \cdots \oplus \delta_c$ have equal characters, $\rho$ and $\rho_1\oplus \cdots \oplus \rho_{c}$ have equal characters.

On the other hand, the representation $\theta$ is isomorphic to $\theta_1\oplus \cdots \oplus \theta_c$ where $\theta_i:F\to \mathrm{GL}(k_i, \Z)$ is induced from $\tilde{\theta}_i:\Gamma \to \mathrm{GL}(k_i, \Z)$ and the latter is defined by 
$$\Gamma \xrightarrow{\mathcal{C}|_{\Gamma}}  \mathrm{Aut}(\Lambda)\to\mbox{GL}(k_1, \Z)\times \cdots \times  \mbox{GL}(k_c,\Z)\xrightarrow{pr_i}\mbox{GL}(k_i,\Z),$$
for each $1\leq i \leq c$. So, to finish the proof it suffices to show that $\tilde{\rho}_i$ and $\tilde{\theta}_i\otimes \R$ have equal characters for each $1\leq i\leq c$. 

By  taking a closer look at $\tilde{\rho}_i$, it is  not difficult to see that it is isomorphic to the composition
$$\Gamma \xrightarrow{\mathcal{C}|_{\Gamma}} \mathrm{Aut}(N)\rightarrow \mathrm{Aut}(\gamma_i(N)/{\gamma_{i+1}(N)})\xrightarrow{d} \mathrm{GL}(\gamma_i(\mathfrak{n})/\gamma_{i+1}(\mathfrak{n}))$$
where we identify $\gamma_i(\mathfrak{n})/\gamma_{i+1}(\mathfrak{n})$ and $V_i/V_{i+1}$ via the isomorphism $\eta:\mathfrak{n}\to \R^n$ and where the second homomorphism is the natural map arising from the action of the automorphism group of $N$ on the lower central series of $N$.

On the other hand, the representation $\tilde{\theta}_i$ can be defined by the composition
$$\Gamma \xrightarrow{\mathcal{C}|_{\Gamma}}  \mathrm{Aut}(\Lambda)\rightarrow \mathrm{Aut}\Big(\sqrt[\Lambda]{\gamma_i(\Lambda)}/{\sqrt[\Lambda]{\gamma_{i+1}(\Lambda)}}\Big)$$ 
where the second homomorphism is the natural map arising from the action of the automorphism group of $\Lambda$ on the adapted lower central series of $\Lambda$.

From the choice of the Mal'cev basis on $\mathfrak{n}$ and the fact that $\sqrt[\Lambda]{\gamma_i(\Lambda)}/{\sqrt[\Lambda]{\gamma_{i+1}(\Lambda)}}$ is a lattice in the Euclidean  group $\gamma_i(N)/{\gamma_{i+1}(N)}$, it follows that $\tilde{\theta}_i\otimes \R$ is isomorphic to the composition
$$\Gamma \xrightarrow{\mathcal{C}|_{\Gamma}} \mathrm{Aut}(N)\rightarrow \mathrm{Aut}(\gamma_i(N)/{\gamma_{i+1}(N)})$$ and hence to $\tilde{\rho}_i$. This finishes the proof.
\end{proof}

\begin{rem}\label{orient} \rm It follows  that the almost-flat manifold $M$ is orientable if and only if the image of the representation $\theta:F\hookrightarrow   \mathrm{GL}(n, \Z)$ lies inside  $\mathrm{SL}(n, \Z)$.
\end{rem}

\begin{lem}\label{sylow} Let $M$ be an orientable almost-flat manifold  with holonomy group $F$.  Let  $S$ be  a 2-Sylow subgroup of $F$ and set $M(2)= N/{q^{-1}(S)}$.  Then $M$ has a Spin structure if and only if $M(2)$ has a Spin structure.
\end{lem}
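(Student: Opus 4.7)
The plan is to reduce the problem to a transfer argument in mod-2 cohomology, exploiting that $[F:S]$ is odd. Since $q^{-1}(S)$ has index $[F:S]$ in $\G$, the natural projection $p:M(2)=N/q^{-1}(S)\to N/\G=M$ is a finite covering of odd degree $[F:S]$, and $M(2)$ inherits orientability from $M$. Because $p$ is a local diffeomorphism we have $TM(2)\cong p^{*}(TM)$, so by naturality of Stiefel--Whitney classes
$$w_2(TM(2))=p^{*}\bigl(w_2(TM)\bigr).$$

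For the forward direction, suppose $M$ admits a Spin structure. Then $w_2(TM)=0$, so the displayed equation gives $w_2(TM(2))=0$, and hence $M(2)$ admits a Spin structure. (Equivalently, one pulls back a lift $\tilde\tau$ of the classifying map along $p$.)

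For the converse, suppose $M(2)$ admits a Spin structure, so that $p^{*}(w_2(TM))=w_2(TM(2))=0$. Consider the transfer homomorphism $p_{!}:\mathrm{H}^{2}(M(2);\Z_2)\to \mathrm{H}^{2}(M;\Z_2)$. It satisfies the standard identity
$$p_{!}\circ p^{*}=[F:S]\cdot\mathrm{id}$$
on $\mathrm{H}^{*}(M;\Z_2)$. Since $[F:S]$ is odd, this composite is simply the identity with $\Z_2$-coefficients. Applying $p_{!}$ to $p^{*}(w_2(TM))=0$ therefore yields $w_2(TM)=0$, so $M$ admits a Spin structure.

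The one non-routine ingredient is the transfer identity $p_{!}\circ p^{*}=[F:S]\cdot\mathrm{id}$, which is classical; it can be obtained topologically from the Becker--Gottlieb transfer for finite covering spaces, or, since $M=B\G$ and $M(2)=Bq^{-1}(S)$ are aspherical, from the algebraic transfer associated to a finite-index subgroup in group cohomology. Everything else reduces to naturality of $w_2$ under the covering $p$.
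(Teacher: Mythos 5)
Your proof is correct and follows essentially the same route as the paper: both arguments reduce the statement to the injectivity of the pullback $p^{*}$ (equivalently, the restriction map $i^{*}$ in group cohomology) on $\mathrm{H}^{2}(-;\Z_2)$ for an odd-index covering, applied to $w_2$. The paper simply asserts this injectivity from the odd index, while you supply the standard transfer identity $p_{!}\circ p^{*}=[F:S]\cdot\mathrm{id}$ that justifies it.
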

\begin{proof} Recall that the second Stiefel-Whitney class $w_2(T M)$ is the obstruction for the existence of a Spin structure on $M$. The  inclusion $i : q^{-1}(S) \hookrightarrow \G$  induces a homomorphism $i^* : \mathrm{H}^2(M, \Z_2) \to \mathrm{H}^2(M(2),\Z_2)$. This is a monomorphism because  $q^{-1}(S)$ is a subgroup of $\G$ of  odd index. Since $w_2(T M(2)) = i^*(w_2(T M)) $, we obtain that $w_2(T M) = 0$ if and only if $w_2(T M(2)) = 0$.
\end{proof}

We also need the following lemma that will help us determine  whether  almost-flat manifolds with cyclic holonomy group  admit  Spin structures.
\begin{lem}\label{GG} Let $A\in \mathrm{SO}(n)$ be of order $2^m$, $m>0$ and let $a\in \lambda_	n^{-1}(\langle A\rangle)$. Then $a$ is of order $2^{m+1}$ if and only if $${1\over 2}(n-\mathrm{Trace}(A^{2^{m-1}}))\equiv 2  \; (\mathrm{ mod } \; 4).$$
\end{lem}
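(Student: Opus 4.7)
The plan is to reduce $A$ to a normal form and then carry out an explicit calculation inside the Clifford-algebra model of $\mathrm{Spin}(n)$.

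First, since $A\in \mathrm{SO}(n)$ has finite order $2^m$, it is diagonalisable over $\C$ and hence conjugate in $\mathrm{SO}(n)$ to a block-diagonal matrix whose blocks are either $2\times 2$ rotations $R(\theta_j)=\begin{pmatrix}\cos\theta_j & -\sin\theta_j\\ \sin\theta_j & \cos\theta_j\end{pmatrix}$ with $\theta_j = 2\pi k_j/2^m$ and $k_j\in\{1,\dots,2^m-1\}$, or trivial $(+1)$-eigenspace. Order considerations force at least one $k_j$ to be odd. Conjugation in $\mathrm{SO}(n)$ lifts to conjugation in $\mathrm{Spin}(n)$, and trace is a class function, so I may assume $A$ is in this normal form. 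Write $p$ for the number of indices $j$ with $k_j$ odd.

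Second, work with the standard model $\mathrm{Spin}(n)\subset \mathrm{Cliff}(\R^n)$. On the $j$-th coordinate $2$-plane, a lift of $R(\theta_j)$ is $\tilde R_j = \cos(\theta_j/2) + \sin(\theta_j/2)\, e_{2j-1}e_{2j}$. The bivectors $e_{2j-1}e_{2j}$ for distinct $j$ involve disjoint basis vectors and therefore commute, so $\tilde A := \prod_j \tilde R_j$ is a well-defined lift of $A$. Each bivector squares to $-1$, so an iterated application of the identity $(\cos\alpha + \sin\alpha\,u)^2 = \cos(2\alpha)+\sin(2\alpha)u$ yields
\[
\tilde A^{2^m} \;=\; \prod_j \bigl(\cos(\pi k_j) + \sin(\pi k_j)\, e_{2j-1}e_{2j}\bigr) \;=\; \prod_j (-1)^{k_j} \;=\; (-1)^{p}.
\]
Since $(-\tilde A)^{2^m}=\tilde A^{2^m}$, both lifts of $A$ have the same $2^m$-th power; hence $\lambda_n^{-1}(\langle A\rangle)$ contains an element of order $2^{m+1}$ (equivalently, is cyclic of order $2^{m+1}$) precisely when $p$ is odd.

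Finally, match this parity to the trace condition. On the $j$-th $2$-plane $R(\theta_j)^{2^{m-1}} = R(\pi k_j) = (-1)^{k_j}\, I$, and on the trivial eigenspace $A^{2^{m-1}}$ acts as the identity; writing $2s$ for the total size of the rotation blocks,
\[
\mathrm{Trace}(A^{2^{m-1}}) \;=\; (n-2s) + 2(s-p) - 2p \;=\; n - 4p,
\]
so $\tfrac12\bigl(n-\mathrm{Trace}(A^{2^{m-1}})\bigr) = 2p$, which is $\equiv 2\pmod 4$ exactly when $p$ is odd. Combining this with the previous paragraph proves the lemma. The main obstacle is verifying the Clifford-algebra identity $(\cos\alpha + \sin\alpha\,u)^{2^m} = \cos(2^m\alpha)+\sin(2^m\alpha)u$ with $u^2=-1$ and confirming that the chosen block lifts commute; once those are in place, everything else is bookkeeping.
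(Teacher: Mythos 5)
Your proof is correct, but it takes a genuinely different route from the paper's. The paper disposes of the lemma in two sentences: the case $m=1$ (an involution in $\mathrm{SO}(n)$ whose $(-1)$-eigenspace has dimension $2p$ lifts to an element of order $4$ in $\mathrm{Spin}(n)$ iff $p$ is odd, i.e.\ iff $\tfrac12(n-\mathrm{Trace}(A))\equiv 2 \pmod 4$) is quoted from Griess and Gagola--Garrison, and the general case is reduced to it by replacing $A$ with the involution $A^{2^{m-1}}$, since a lift $a$ of $A$ has order $2^{m+1}$ exactly when $a^{2^{m-1}}$, a lift of $A^{2^{m-1}}$, has order $4$. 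You instead prove everything from scratch in the Clifford algebra: normal form with rotation blocks $R(2\pi k_j/2^m)$, commuting block lifts $\cos(\theta_j/2)+\sin(\theta_j/2)\,e_{2j-1}e_{2j}$, the computation $\tilde A^{2^m}=(-1)^{\sum_j k_j}=(-1)^p$ with $p$ the number of odd $k_j$, and the bookkeeping $\mathrm{Trace}(A^{2^{m-1}})=n-4p$. The steps all check out, including the points that actually need care: both lifts of $A$ have the same $2^m$-th power (so the conclusion is independent of the choice of lift), and both the order question and the trace condition are invariant under the conjugation used to reach normal form. (The sign ambiguity in whether your $\tilde R_j$ covers $R(\theta_j)$ or $R(-\theta_j)$ is harmless, since $k_j\mapsto 2^m-k_j$ preserves parity.) What your argument buys is self-containedness and extra information --- it exhibits the lift explicitly and identifies $\lambda_n^{-1}(\langle A\rangle)$ as $C_{2^{m+1}}$ or $C_2\times C_{2^m}$ according to the parity of $p$, which is exactly the dichotomy used in the proof of the Main Theorem; the paper's version buys brevity at the cost of two external references.
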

\begin{proof}The case $m=1$ is a well-known (see \cite{griess}, \cite{GG}). The general case follows easily from this case where we replace the matrix $A$ with $A^{2^{m-1}}$.
\end{proof}

We are now ready to prove the main theorem of the paper.

\begin{proof}[Proof of  Main Theorem] For part (a), suppose  ${1\over 2}(n-\mathrm{Trace}(\theta(t)^{2^{m-1}}))\not\equiv 2  \; (\mathrm{ mod } \; 4)$. Then, by Lemma \ref{GG} and Proposition \ref{equiv},  we have $\lambda_n^{-1}(\rho(C_{2^m}))\cong C_2\times C_{2^m}$. So, the restriction $\lambda_n: \lambda_n^{-1}(\rho(C_{2^m}))\to \rho(C_{2^m})$ splits and hence the classifying homomorphism $\rho \circ q:\Gamma\to \mathrm{SO}(n)$ lifts to the universal covering group $\mathrm{Spin}(n)$ of $\mathrm{SO}(n)$. This, by Proposition \ref{reduce}, insures that $M$ has a Spin structure.

For part (b),  in view of Lemma \ref{sylow}, we can assume the $C_{2^m}$ is the whole holonomy group of $M$.  Thus, $M$ has a Spin structure if and only if  there is a lift  $l:\G\to \mbox{Spin}(n)$ of the composite homomorphism $\rho\circ {q}: \Gamma\xrightarrow{q} C_{2^m}\xrightarrow{\rho} \mathrm{SO}(n)$.  
%We can also assume that ${1\over 2}(n-\mathrm{Trace}(\theta(t)))\equiv 2  \; (\mathrm{ mod } \; 4)$, for otherwise, we would be done by part (a). 
But by our assumption and Lemma \ref{sylow}, the primage $\lambda_n^{-1}(\rho(C_2))$ is isomorphic to   $C_{2^{m+1}}$. This shows that there is lift  $l:\G\to \mbox{Spin}(n)$ if and only if  $q:\G\to C_{2^m}$ factors through $C_{2^{m+1}}$ which happens if and only if $q_*:\G_{ab}\to C_{2^m}$ factors through $C_{2^{m+1}}$. 
\end{proof}

%\begin{rem} \rm For the practical computations that will come up in the next section,    we note that ${q}_*:\G_{ab}\to C_{2^m}$ does not factors through a cyclic group of order $4$ if and only %if  the induced $\Z$-module epimorphism $\G_{ab}\otimes  \Z_{2^{m+1}}\to \Z_{2^m}$ splits.
%\end{rem}

\section{Examples}
It is well-known that all closed orientable manifolds of dimension at most $3$ have a Spin structure (see \cite[p.~35]{kirby}, \cite[Exercise 12.B and VII, Theorem 2]{MS}).  
Next we  give a  list of  $4$-dimensional orientable almost-flat manifolds modelled on a connected,  simply-connected nilpotent  Lie group  $N$ that cannot have a Spin structure.   This list is complete in the sense  that,  up to dimension $4$,  it gives all possible examples of   orientable almost-flat manifolds whose holonomy has a cyclic  $2$-Sylow subgroup not admitting a Spin structure (see \cite{PS}). In fact, we will see that in each of these examples the holonomy group is $C_2$. In contrast all flat manifolds with holonomy $C_2$ have  a Spin structure (see \cite[Theorem 3.1(3)]{HS}).

For this purpose, we use the  classification of the associated almost-Bieberbach groups given in Sections 7.2 and 7.3 of \cite{dekimpe}.
\subsection{N is 2-step nilpotent} The only family of almost-flat manifolds without a Spin structure are classified by number 5, $Q=C2$ on page 171 of \cite{dekimpe}.

For each integer $k>0$, the almost-Bieberbach group $\G_k$  has the presentation 
\begin{align*}
\Gamma_k =\langle  a, b, c, d, \alpha \;|\; & [b, a]=1, [c,a]=d^k, [d, a]=1, [c,b]=d^k, [d, b]=1, [d,c]=1, \\
& \alpha^2=d, \alpha a \alpha^{-1} =b^{-1}, \alpha b \alpha^{-1}= a^{-1}, \alpha d \alpha^{-1} = d, \alpha c\alpha^{-1}=c^{-1} \rangle
\end{align*}
where
$\Lambda = \langle a, b, c, d \rangle$ and $\sqrt[\Lambda]{[\Lambda, \Lambda]}= \langle d \rangle.$
Since the representation  $\theta:C_2\hookrightarrow   \mathrm{GL}(4, \Z)$ arises from the conjugation by the element $\alpha$ on $\Lambda$, it is generated  by the matrix
{\small{$$\displaystyle{\left[\begin{array}{rrrr} 
1 &0 & 0 & 0\\
0 &-1 & 0 & 0\\
0 &0  & 0 & -1\\
0&0 & -1 & 0\end{array}\right]}$$}}
 which lies in $\mathrm{SL}(4, \Z)$. So, by Remark \ref{orient}, $M_k$ is orientable for all $k>0$.

The  abelianization of $\G_k$ has the presentation
$$(\Gamma_k)_{ab} =\langle  \bar{a},  \bar{c}, \bar{\alpha} \;|\;  {\bar{\alpha}}^{2k}=1\rangle = C_{\infty}^2 \times C_{2k}.$$ The map $q_*:(\Gamma_k)_{ab} \to C_2$ can then be seen as  the epimorphism arising from projection of the $C_{2k}$-factor onto $C_2$. Therefore, it does not factor through $C_4$ if and only if $k$ is odd. So, by  Main Theorem, part (b), $M_k$ does not have a  Spin structure if and only if $k$ is odd.

\subsection{N is 3-step nilpotent}  In this case, we find 3 families of almost-flat manifolds without a Spin structure. 

The first family is classified by number 3, $Q=\langle (2l, 1)\rangle$ on page 220 of \cite{dekimpe}. For each $k,l>0$, the associated almost-Bieberbach group $\G_k$  has the presentation 
\begin{align*}
\Gamma_{k,l} =\langle  a, b, c, d, \alpha \;|\; & [b, a]=c^{2l}d^{(2l-1)k}, [c,a]=1, [d, a]=1, [c,b]=d^{2k}, [d, b]=1, [d,c]=1, \\
& \alpha^2=d, \alpha a  =a\alpha c, \alpha b =b^{-1}\alpha, \alpha d \alpha^{-1} = d, \alpha c\alpha^{-1}=c^{-1} \rangle
\end{align*}
where
$\Lambda = \langle a, b, c, d \rangle$,  $\sqrt[\Lambda]{[\Lambda, \Lambda]}= \langle c, d \rangle$ and  $\sqrt[\Lambda]{\gamma_3(\Lambda)}= \langle d \rangle.$
The representation  $\theta:C_2\hookrightarrow   \mathrm{GL}(4, \Z)$  is generated  by the matrix
{\small{$$\displaystyle{\left[\begin{array}{rrrr} 
1 &0 & 0 & 0\\
0 &-1 & 0 & 0\\
0 &0 & -1 & 0\\
0&0 & 0 & 1\end{array}\right]}$$}} which lies in $\mathrm{SL}(4, \Z)$. So, $M_{k,l}$ is orientable for all $k>0$.

The  abelianization of $\G_{k,l}$ has the presentation
$$(\Gamma_{k,l})_{ab} =\langle  \bar{a},  \bar{b},  \bar{\alpha} \;|\; {\bar{b}}^2=1, {\bar{\alpha}}^{2k}=1\rangle = C_{\infty}\times C_2 \times C_{2k}.$$ The map $q_*:(\Gamma_{k,l})_{ab} \to C_2$ is  the epimorphism arising from projection of the $C_{2k}$-factor onto $C_2$. Therefore, it does not factor through $C_4$ if and only if $k$ is odd. So, by Main Theorem, part (b), $M_{k,l}$ does not have a Spin structure if and only if $k$ is odd.

The second family is classified by number 5, $Q=\langle (2l, 0)\rangle$, on page 222 of \cite{dekimpe}. For each $k,l>0$, the associated almost-Bieberbach group  $\G_{k,l}$  has the presentation 
\begin{align*}
\Gamma_{k,l} =\langle  a, b, c, d, \alpha \;|\; & [b, a]=c^{2l}, [c,a]=d^k, [d, a]=1, [c,b]=d^{-k}, [d, b]=1, [d,c]=1, \\
& \alpha^2=d, \alpha a  =b\alpha, \alpha b =a\alpha, \alpha d \alpha^{-1} = d, \alpha c\alpha^{-1}=c^{-1} \rangle
\end{align*}
where
$\Lambda = \langle a, b, c, d \rangle$,  $\sqrt[\Lambda]{[\Lambda, \Lambda]}= \langle c, d \rangle$ and  $\sqrt[\Lambda]{\gamma_3(\Lambda)}= \langle d \rangle.$
The representation  $\theta:C_2\hookrightarrow   \mathrm{GL}(4, \Z)$  is generated  by the matrix
{\small{$$\displaystyle{\left[\begin{array}{rrrr} 
1 & 0 & 0 & 0\\
0 & -1 & 0 & 0\\
0 &0 & 0 & 1\\
0&0 & 1 & 0\end{array}\right]}$$}} which lies in $\mathrm{SL}(4, \Z)$. So, $M_{k,l}$ is orientable for all $k>0$.

The  abelianization of $\G_{k,l}$ has the presentation
$$(\Gamma_k)_{ab} =\langle  \bar{a},  \bar{c}, \bar{\alpha} \;|\;  {\bar{c}}^2=1, {\bar{\alpha}}^{2k}=1\rangle = C_{\infty}\times C_2 \times C_{2k}.$$ The map $q_*:(\Gamma_{k,l})_{ab} \to C_2$ is the epimorphism resulting from projection of the $C_{2k}$-factor onto $C_2$. Therefore, it does not factor through $C_4$ if and only if $k$ is odd. So, by Main Theorem, part (b), $M_{k,l}$ does not have a Spin structure if and only if $k$ is odd.

The third family is classified by number 5, $Q=\langle (2l+1, 0)\rangle$, on page 222 of \cite{dekimpe}. For each $k, l>0$, the associated almost-Bieberbach group $\G_{k,l}$  has the presentation 
\begin{align*}
\Gamma_{k,l} =\langle  a, b, c, d, \alpha \;|\; & [b, a]=c^{2l+1}, [c,a]=d^k, [d, a]=1, [c,b]=d^{-k}, [d, b]=1, [d,c]=1, \\
& \alpha^2=d, \alpha a  =b\alpha, \alpha b =a\alpha, \alpha d \alpha^{-1} = d, \alpha c\alpha^{-1}=c^{-1} \rangle
\end{align*}
where
$\Lambda = \langle a, b, c, d \rangle$,  $\sqrt[\Lambda]{[\Lambda, \Lambda]}= \langle c, d \rangle$ and  $\sqrt[\Lambda]{\gamma_3(\Lambda)}= \langle d \rangle.$
The representation  $\theta:C_2\hookrightarrow   \mathrm{GL}(4, \Z)$  is generated  by the matrix
{\small{$$\displaystyle{\left[\begin{array}{rrrr} 
1 & 0 & 0 & 0\\
0 & -1 & 0 & 0\\
0 &0 & 0 & 1\\
0&0 & 1 & 0\end{array}\right]}$$}}  which lies in $\mathrm{SL}(4, \Z)$. So, $M_{k,l}$ is orientable for all $k>0$.

The  abelianization of $\G_{k, l}$ has the presentation
$$(\Gamma_k)_{ab} =\langle  \bar{a}, \bar{\alpha} \;|\;  {\bar{\alpha}}^{2k}=1\rangle = C_{\infty} \times C_{2k}.$$ The map $q_*:(\Gamma_{k,l})_{ab} \to C_2$ can then be seen as  the epimorphism arising from projection of the $C_{2k}$-factor onto $C_2$. Therefore, it does not factor through $C_4$ if and only if $k$ is odd. So, by Main Theorem, part (b), $M_{k,l}$ does not have a Spin structure if and only if $k$ is odd.\\

We summarise our investigations in the table below.  Every $4$-dimensional almost-Bieberbach group $\Gamma$ fits into an extension
$$0\to \Z\to \Gamma \to Q \to 1$$ where $Q$ is a $3$-dimensional almost-crystallographic group (see \cite[\S 6.3]{dekimpe}).  If $N$ is $2$-step nilpotent, then $Q$ is in fact a crystallographic group.  The first column of the table indicates the number of the associated almost-crystallographic  group $Q$ as shown in \cite[\S 7.2-3]{dekimpe}  and the page number in \cite{dekimpe} where the presentation of $\G$ is given. The second column gives the classification of $Q$ as in the International Tables for Crystallography (I.T.) or as in  \cite[\S 7.1]{dekimpe}. The third column indicates the nilpotency class of the  group $N$ on which the almost-flat manifold is modelled. Columns four and five show the abelianization and the holonomy group, respectively. The last column indicates  the exact parameters for which the associated almost-flat manifold cannot admit a Spin structure.

{\small
\setlength\LTpost{0pt}
\addtolength{\tabcolsep}{-2pt}
\begin{longtable}{|c|c|c|c|c|c|c|c|}%[h]
	\caption*{\small Almost-flat manifolds without Spin structures}	
	\label{tab:homology} \\
%	\begin{changemargin}{-4.3cm}{1.0cm}
%\leftmargin{-1in}
%\begin{tabular}{|c|c|cl}
	\hline
	 \cite[\S7.2-3]{dekimpe} & $Q$ & class & $\G_{ab}$ & Holonomy &  Parameters   \\* \hline\hline
\multirow{1}{*}{5, p.~171}
&$C$2
		&$2$ & $C_{\infty}^2 \times C_{2k}$& $C_2$& $k$ odd 
		\\
	\hline
	\multirow{1}{*}{3, p.~220}
&$\langle (2l, 1)\rangle$
		& 3 & $C_{\infty}\times C_2 \times C_{2k}$&$C_2$& $l>0$, $k$ odd  
		\\
	\hline
	\multirow{1}{*}{5, p.~222}
&$\langle (2l, 0)\rangle$
		& 3 & $C_{\infty}\times C_2 \times C_{2k}$&$C_2$&  $l>0$, $k$ odd 
		\\
	\hline
	\multirow{1}{*}{5, p.~222}
&$\langle (2l+1, 0)\rangle$
		& 3& $C_{\infty} \times C_{2k}$ &$C_2$&  $l>0$, $k$ odd 
		\\
	\hline
\end{longtable}
%\begin{center}{\footnotesize
%\noindent Explain the table!!!
%}\end{center}
%\end{tabular}
%\end{changemargin}
}

\end{document}